\crefname{equation}{eq.}{eqs.}
\theoremstyle{thmstyleone}%
\newtheorem{theorem}{Theorem}
\newtheorem{lemma}[theorem]{Lemma}
\theoremstyle{thmstyletwo}%
\newtheorem{remark}{Remark}%
\theoremstyle{thmstylethree}%
\newtheorem{definition}{Definition}%
\begin{document}

\title[A $\sqrt{2}$-accelerated FISTA for composite strongly convex problems]{A $\sqrt{2}$-accelerated FISTA for composite strongly convex problems}


\author*[1]{\fnm{Kansei} \sur{Ushiyama}}\email{ushiyama.k.1053@m.isct.ac.jp}



\affil*[1]{\orgdiv{Department of Mathematical and Computing Science, School of Computing}, \orgname{Institute of Science Tokyo}, \orgaddress{\street{2-12-1 Ookayama}, \city{Meguro}, \postcode{152-8550}, \state{Tokyo}, \country{Japan}}}


\abstract{
In this paper, we propose a novel accelerated forward-backward splitting algorithm for minimizing composite convex functions expressed as the sum of a smooth function and a possibly nonsmooth function. When the composite objective is strongly convex, the proposed method achieves a linear convergence rate in objective value that improves the leading constant in the exponent by a factor of $\sqrt{2}$ relative to FISTA and also improves upon the best previously known convergence rate for this problem class. Our convergence analysis remains valid even when one of the two component functions is weakly convex, provided that their sum remains convex. The proposed algorithm is derived by discretizing a continuous-time model of the Information-Theoretic Exact Method (ITEM), an optimal first-order method for unconstrained smooth strongly convex minimization.
}

\keywords{Convex optimization, Accelerated gradient methods, Composite optimization, Ordinary differential equations}

\pacs[MSC Classification]{90C06, 90C25, 90C60, 49M29}

\maketitle

\section{Introduction}\label{sec1}
In this paper, we propose a new method for solving the following problem:
\begin{equation}
    \minimize_{x\in\RR^d} \quad f(x) \coloneqq g(x)+h(x),
    \label{problem}
\end{equation}
where $g\colon\RR^d\to\RR$ is differentiable and $\mu_g$-convex and has an $L_g$-Lipschitz-continuous gradient, and $h\colon\RR^d\to\RR\cup\{+\infty\}$ is proper, lower semicontinuous, and $\mu_h$-convex.
Here, a function $F$ is said to be $\lambda$-convex if $F-\frac{\lambda}{2}\norm{\cdot}^2$ is convex. Such a function is called $\lambda$-strongly convex if $\lambda>0$ and $(-\lambda)$-weakly convex if $\lambda<0$.
Let $\mu\coloneqq\mu_g+\mu_h$ and assume that $\mu\ge0$.
We further assume that a minimizer $x^\star\in\argmin_x f(x)$ exists, define the minimum value as $f^\star\coloneqq f(x^\star)$, and assume that $h$ is prox-friendly, meaning that the proximal operator
\(
    \prox_{\eta h}(x)\coloneqq\argmin_{y\in\RR^d}\setE{\eta\,h(y)+\tfrac{1}{2}\norm{y-x}^2}
\)
can be computed efficiently for any $\eta>0$ whenever it is well-defined.

When $g$ is convex (respectively, strongly convex) and $h$ is convex, problem~\eqref{problem} is known as a composite convex (respectively, composite strongly convex) optimization problem, following the terminology of~\cite{N13}. Such problems arise widely in machine learning, signal and image processing, and statistics, typically with $g$ representing a loss function and $h$ a regularizer.
A canonical example is the LASSO~\cite{T18}, in which $g$ is a least-squares loss and $h$ is an $\ell_1$-norm regularizer that promotes sparsity in the solution.
Closed convex constraints of the form $x\in C$ also fit this framework by taking $h=\iota_C$, where $\iota_C$ is the indicator function of $C$, defined by $\iota_C(x)=0$ if $x\in C$ and $\iota_C(x)=+\infty$ otherwise.

A standard approach to problem~\eqref{problem} is forward-backward splitting, which dates back to~\cite{P79} and treats $g$ and $h$ separately by taking a gradient step on $g$ followed by a proximal step on $h$.
Combining this technique with Nesterov's acceleration~\cite{N83,N18b} yields the Fast Iterative Shrinkage/Thresholding Algorithm (FISTA)~\cite{BT09}.
FISTA is widely used because of its accelerated convergence, and numerous variants have been proposed; see \cref{sec:related} for a brief overview.
Our algorithm can be viewed as a FISTA-type method and, for composite strongly convex problems, achieves what is, to the best of our knowledge, the fastest known convergence rate.

Problem~\eqref{problem} generalizes the standard composite (strongly) convex settings by allowing either $g$ or $h$ to be weakly convex while requiring the overall objective $f$ to remain convex.
Problem~\eqref{problem} can be reformulated as a standard composite (strongly) convex problem by adding $\frac{\mu_h}{2}\normm{x}$ to $g(x)$ and subtracting the same term from $h(x)$. Under this reformulation, $g$ becomes $(\mu_g + \mu_h)$-convex and $h$ becomes convex; recall that $\mu_g + \mu_h = \mu \ge 0$.
Moreover, the modified nonsmooth term $h(x) - \frac{\mu_h}{2}\normm{x}$ remains prox-friendly; see \cref{sec:experiment}.
Consequently, existing methods for composite (strongly) convex problems can be applied to~\eqref{problem} through this equivalent reformulation.

In this paper, however, we study the original formulation~\eqref{problem} directly, without reformulation, as in~\cite{B16}. Our motivation is to handle a weakly convex function $h$ directly through its proximal operator.
Weakly convex regularizers have recently been used as alternatives to the $\ell_1$-norm regularizer to obtain less biased sparse solutions in inverse problems such as sparse recovery and imaging; see \cref{sec:related}.
Accordingly, in the spirit of FISTA, we derive an algorithm that directly uses the proximal operator of $h$, thereby allowing existing library implementations to be used without reformulating the regularizer; see, for example,~\cite{CCCP}.

The derivation of our algorithm is based on discretizing the ordinary differential equation (ODE)
\begin{equation}
    \ddot{x}(t)+3\sqrt{\mu}\coth(\sqrt{\mu}t)\dot{x}(t)+2\nabla f(x(t))=0,
    \label{ITEMODE}
\end{equation}
where $\dot{x}$ and $\ddot{x}$ denote $\dv{x}{t}$ and $\frac{\dd^2x}{\dd t^2}$, respectively.
This ODE was introduced in~\cite{KY23b} as a continuous-time model of the Information-Theoretic Exact Method (ITEM)~\cite{TD23}, an optimal first-order method for smooth strongly convex minimization.
Exploiting the correspondence between this ODE and our discrete-time algorithm yields a clear and concise convergence analysis.

\subsection{Contributions}
\label{sec:cont}

\begin{itemize}
    \item In \cref{sec:ITEMODE}, we establish that ITEM ODE~\eqref{ITEMODE} for a $\mu$-convex function $f$ achieves the convergence rate
    \begin{equation}
        f(x(t))-f^\star \le
        \begin{cases}
            \min\paren*{\dfrac{12\mu}{(1-\e^{-2\sqrt{\mu}t})^2}\e^{-2\sqrt{\mu}t},\dfrac{3}{t^2}}\normm{x_0-x^\star} & \text{if } \mu>0,\\
            \dfrac{1}{t^2}\normm{x_0-x^\star} & \text{if } \mu=0.
        \end{cases}
        \label{thm2second}
    \end{equation}
    When $\mu>0$, this bound provides not only an exponential convergence rate but also a fast nonasymptotic sublinear rate. The latter can be sharper for small values of $t$, particularly when $\mu$ is small.

    \item In \cref{sec:discretization}, we discretize ITEM ODE~\eqref{ITEMODE} and construct a new accelerated forward-backward splitting algorithm for problem~\eqref{problem}, as presented in \cref{alg}. Although the algorithm requires prior knowledge of $\mu_g$, $\mu_h$, and $L_g$, it can readily be extended to the case in which $L_g$ is unknown, as discussed in \cref{sec:remarks}.

    \item In \cref{sec:convergence}, as a discrete-time counterpart of~\eqref{thm2second}, we show that the proposed algorithm achieves the convergence rate
    \begin{equation}
        f(x_k)-f^\star \le
        \begin{cases}
            \min\paren*{\dfrac{L_g-\mu_g}{2}r^{-k+1},\dfrac{6L_g}{k^2}}\normm{x_0-x^\star} & \text{if } \mu>0,\\
            \dfrac{2L_g}{k^2}\normm{x_0-x^\star} & \text{if } \mu=0,
        \end{cases}
        \label{cord21}
    \end{equation}
    where
    \[
        q_1\coloneqq\frac{\mu_g}{L_g}, \qquad q_2\coloneqq\frac{\mu_h}{L_g}, \qquad r\coloneqq\dfrac{1+q_2+\sqrt{(q_1+q_2)(2-q_1+q_2)}}{1-q_1}>1.
    \]
    When $\mu_g>0$ and $\mu_h=0$, problem~\eqref{problem} specializes to a composite strongly convex minimization problem. To the best of our knowledge, the proposed algorithm achieves the fastest convergence rate among existing accelerated methods for this setting; see \cref{tab:comparison}. In the general convex case $\mu=0$, that is, $\mu_g=-\mu_h$, the rate coincides with that of FISTA.
\end{itemize}

\begin{table}[htbp]
    \caption{Comparison of existing accelerated methods for $\mu_g>0$ and $\mu_h=0$. Let $k$ denote the iteration count and $q\coloneqq\mu_g/L_g$.}
    \label{tab:comparison}
    \centering
    \begin{tabular}{ccc}
        \toprule
        Method & Convergence rate of $f(x_k)-f^\star$ for $q \ll 1$ & Applicability\\
        \cmidrule(r){1-1}\cmidrule(l){2-3}
        Strongly convex FISTA~\cite{CP16,CC19,FV19,FV20,ADA21}
        & $\Order{\min\paren*{(1-\sqrt{q})^k,1/k^2}}$ & $\mu_g$ may be 0\\
        ADR~\cite[Thm.~4.2]{ADR22}
        & $\Order{(1+\sqrt{2q}-6q)^{-k}}$ & \\
        SR2 strongly convex FISTA (ours)
        & $\Order{\min\paren*{(1+\sqrt{2q}+q)^{-k},1/k^2}}$ & $\mu_g$ may be 0\\
        \bottomrule
    \end{tabular}
\end{table}

\subsection{Related work}
\label{sec:related}

\bmhead{Accelerated algorithms for composite convex and composite strongly convex problems}

Forward-backward splitting is an operator-splitting method for composite convex and composite strongly convex problems (cf.~\cite{RB16}) and can be traced back to~\cite{P79}.
For $\ell_1$-regularized problems, this method is known as the Iterative Shrinkage/Thresholding Algorithm (ISTA)~\cite{DDD04}.
Its iteration is given by $x_{k+1}=\prox_{\eta h}(x_k-\eta\nabla g(x_k))$ for some $\eta>0$, and its convergence rates in terms of $f(x_k)-f^\star$ are $\Order{1/k}$ when $g$ is convex and $\Order{(1-\mu/L_g)^k}$ when the objective is strongly convex.

For composite convex problems, FISTA~\cite{BT09} achieves the convergence rate $f(x_k)-f^\star\le\Order{1/k^2}$.
The fastest method for this setting is OptISTA~\cite{JGR25}, which exactly matches the corresponding lower bound.
In addition, several methods have been proposed to accelerate the convergence of the gradient-mapping norm~\cite{MS13,GL16,KF18}.
Furthermore, the authors of~\cite{LSY24} showed that a modified FISTA attains asymptotic linear convergence for strongly convex objectives.

For strongly convex objectives, several accelerated methods have been proposed~\cite{CP16,CC19,FV19,FV20,ADA21,RC22}.
They achieve the rate $f(x_k)-f^\star=\Order{\min\paren*{(1-\sqrt{\mu/L_g})^k,1/k^2}}$.
The fastest known method to date is that of~\cite{ADR22}, which improves the geometric rate of these methods by a factor of $\sqrt{2}$; see \cref{tab:comparison}.
The recent work~\cite{UTDT26}, which cited the preprint version of this paper, proposed an algorithm called Prox-ITEM. The authors showed that Prox-ITEM achieves the convergence rate $\norm{x_k-x^\star}^2=\Order{(1-\sqrt{\mu/L_g})^{2k}}$ and that this rate exactly matches the corresponding lower bound.
However, because $f(x_k)-f^\star$ cannot, in general, be upper-bounded by $\norm{x_k-x^\star}^2$ in the nonsmooth setting, the convergence rate of Prox-ITEM in terms of $f(x_k)-f^\star$ remains unclear.

These methods require the strong-convexity parameter $\mu$ as an input. By contrast, restart-based algorithms that achieve fast linear convergence without prior knowledge of $\mu$ were proposed in~\cite{LX15,ACDLR24}.

\bmhead{Weakly convex regularizers}

The $\ell_1$-norm regularizer promotes sparsity in the solution but also introduces bias because it penalizes large coefficients linearly.
To mitigate this bias, weakly convex regularizers are often used in place of the $\ell_1$-norm.
Such regularizers often become constant or nearly constant for large coefficients.
Typical examples include the Minimax Concave Penalty (MCP)~\cite{Z10} and the Smoothly Clipped Absolute Deviation (SCAD)~\cite{FL01}:

\newcommand{\MCP}[3]{\operatorname{MCP}\!\left(#1;#2,#3\right)}
\begin{equation}
\label{MCP}
\MCP{x}{\lambda}{\gamma}
=
\begin{cases}
\lambda\,|x|-\dfrac{x^2}{2\gamma}, & |x|\le\gamma\lambda,\\[6pt]
\dfrac{1}{2}\,\gamma\lambda^2, & |x|>\gamma\lambda.
\end{cases}
\qquad(\lambda>0,\ \gamma>1)
\end{equation}

\newcommand{\SCAD}[3]{\operatorname{SCAD}\!\left(#1;#2,#3\right)}
\[
\SCAD{x}{\lambda}{a}
=
\begin{cases}
\lambda\,|x|, & |x|\le\lambda,\\[6pt]
\dfrac{-x^2+2a\lambda|x|-\lambda^2}{2(a-1)}, & \lambda<|x|\le a\lambda,\\[8pt]
\dfrac{a+1}{2}\,\lambda^2, & |x|>a\lambda.
\end{cases}
\qquad(\lambda>0,\ a>2)
\]
Other examples of weakly convex regularizers can be found in, for example,~\cite{BW21,YSSMGS19}.

Problem~\eqref{problem} allows the regularizer $h$ to be weakly convex.
The convergence of ISTA for this problem was analyzed in~\cite{B16}. Moreover, although this is not stated explicitly in the literature, a variant of the FISTA-type algorithm proposed in~\cite{CP16} appears to be applicable to this setting.
Under alternative assumptions that do not require the overall objective to be convex, numerous alternative algorithms have also been proposed; see, for example,~\cite{OCBP14,GL16,LL15,TSP18,DP19,LMS21,BW21}.

\bmhead{ODE approach}

Some optimization methods can be modeled by ODEs obtained by taking the limit as their step sizes tend to zero. Such models provide intuitive insights into algorithms, facilitate clear convergence proofs, and serve as guidelines for constructing new methods.
Classical examples include the gradient flow associated with the steepest descent method and a second-order ODE describing the motion of a particle subject to friction for the heavy-ball method~\cite{P64,WRJ21}.
A recent milestone was the derivation by Su et al.~\cite{SBC16} of a second-order ODE with a vanishing damping term as a continuous-time model of Nesterov's accelerated gradient method~\cite{N83}. This model was later extended to accelerated mirror descent~\cite{KBB15}.
Since then, ODEs with similar structures have been studied extensively~\cite{AC17,M17,AD17,ACR19,APR16,ACFR222,ABCR222,ACPR18,ACR18,BCL21,ABCR22}.
Specifically, continuous-time models of ITEM and the Triple Momentum Method (TMM)~\cite{VFL18} were derived in~\cite{KY23b}.
Moreover, high-resolution ODEs, which incorporate step sizes and thereby model algorithms more accurately, have been analyzed in~\cite{SDJS22,L22,SGK20,MZY23}.

In the ODE approach, forward-backward splitting can be incorporated in two ways: by representing the splitting directly in continuous time through a proximal operator, as in~\cite{BC18,BK21,BH24}, or by viewing the splitting as a discretization strategy for dynamics that do not themselves involve a proximal operator.
In this paper, we adopt the latter strategy, as detailed in \cref{sec:discretization}.

\bmhead{Comparison of ITEM ODE with other ODEs}

The seemingly unusual damping coefficient $3\sqrt{\mu}\coth(\sqrt{\mu}t)$ in ITEM ODE~\eqref{ITEMODE} can be understood through its relationship with the continuous-time models of the Triple Momentum Method (TMM)~\cite{VFL18} and the Optimized Gradient Method (OGM)~\cite{KF16}.

TMM minimizes $\mu$-strongly convex functions with $L$-Lipschitz-continuous gradients and achieves the convergence rate $\norm{x_k-x^\star}^2\le\Order{(1-\sqrt{\mu/L})^{2k}}$, which is optimal up to a constant factor, whereas ITEM achieves the exactly optimal rate.
OGM minimizes convex functions with $L$-Lipschitz-continuous gradients and achieves the exactly optimal convergence rate $f(x_k)-\min f\le\Order{1/k^2}$.
The ITEM algorithm asymptotically reduces to TMM as $k\to\infty$ and reduces to OGM as $\mu\to0$.

The continuous-time models of these methods reflect these relationships through the damping coefficient $3\sqrt{\mu}\coth(\sqrt{\mu}t)$.
Since $3\sqrt{\mu}\coth(\sqrt{\mu}t)\to3\sqrt{\mu}$ as $t\to\infty$, this coefficient can be interpreted as a time-dependent refinement of the constant coefficient $3\sqrt{\mu}$, which is optimal in a certain sense~\cite{ADR22} and appears in the TMM ODE
\[
    \ddot{x}(t)+3\sqrt{\mu}\dot{x}(t)+2\nabla f(x(t))=0.
\]
The TMM ODE achieves the convergence rate $\norm{x(t)-x^\star}^2\le\Order{\e^{-2\sqrt{\mu}\,t}}$~\cite{ADR22}, whereas the ITEM ODE sharpens this bound to $\norm{x(t)-x^\star}^2\le\Order{\cosh^{-2}(\sqrt{\mu}\,t)}$~\cite{USM24}, as shown in~\eqref{lyapu2convv} in the subsequent analysis.

Moreover, since $3\sqrt{\mu}\coth(\sqrt{\mu}t)\to3/t$ as $\mu\to0$, the ITEM ODE reduces to the OGM ODE
\[
    \ddot{x}(t)+\frac{3}{t}\dot{x}(t)+2\nabla f(x(t))=0
\]
in the limit $\mu\to0$.
The convergence rate of the ITEM ODE also recovers that of the OGM ODE in this limit, as shown by~\eqref{lyapu2conv} in the subsequent analysis.
Therefore, the ITEM ODE smoothly interpolates between the TMM ODE and the OGM ODE, just as the ITEM algorithm interpolates between their discrete-time counterparts.

\subsection{Notation and organization of the paper}
\label{sec:notation}

\bmhead{Notation}

In this paper, $\inpr{\cdot}{\cdot}$ denotes the Euclidean inner product, and $\norm{\cdot}$ denotes the Euclidean norm.
For a time-dependent function $x\colon\RR\to\RR^d$, we use $\dot{x}$ to denote its time derivative.
For problem~\eqref{problem}, $f^\star$ denotes the optimal value of $f$, and $x^\star$ denotes an optimal solution.
For a possibly nonconvex function $h\colon\RR^d\to\RR\cup\{+\infty\}$, $\partial h(x)$ denotes the Fr\'echet subdifferential of $h$ at $x$ (cf.~\cite{RW98b}).

A function $F\colon\RR^d\to\RR$ is said to be $\mu$-convex if $F-\frac{\mu}{2}\norm{\cdot}^2$ is convex.
Such a function is called $\mu$-strongly convex if $\mu>0$ and $(-\mu)$-weakly convex if $\mu<0$.
A differentiable function $F$ is said to have an $L$-Lipschitz-continuous gradient if $\norm{\nabla F(x)-\nabla F(y)}\le L\norm{x-y}$ for all $x,y\in\RR^d$.

\bmhead{Hyperbolic functions}

The hyperbolic functions are defined by
\begin{equation}
    \sinh(u)=\frac{\e^u-\e^{-u}}{2}, ~~~
    \cosh(u)=\frac{\e^u+\e^{-u}}{2}, ~~~
    \tanh(u)=\frac{\sinh(u)}{\cosh(u)}, ~~~
    \coth(u)=\frac{\cosh(u)}{\sinh(u)},
\end{equation}
where the definition of $\coth(u)$ applies to $u\ne0$.
For any $u\in\RR$, the identities
\[
    \cosh^2(u)-\sinh^2(u)=1, \quad \sinh(2u)=2\sinh(u)\cosh(u), \quad \cosh(2u)=2\sinh^2(u)+1
\]
hold.
Furthermore, $\sinh(u)\ge u$ for all $u\ge0$, and $\abs{\tanh(u)}<1$ for all $u\in\RR$.

\bmhead{Organization}

The remainder of this paper is organized as follows.
In \cref{sec:ITEMODE}, we analyze the convergence rate of the ITEM ODE.
In \cref{sec:discretization}, we discretize the ODE and introduce a new algorithm.
In \cref{sec:convergence}, we analyze the algorithm in parallel with the analysis in \cref{sec:ITEMODE}.
Finally, in \cref{sec:remarks}, we comment on the parameters $L_g$ and $\mu$ and discuss future directions.

\section{Convergence rate of ITEM ODE}\label{sec:ITEMODE}

In this section, we present a convergence analysis of ITEM ODE~\eqref{ITEMODE}, which serves as the foundation for the derivation of our algorithm. The continuous-time analysis in this section is logically independent of the subsequent discrete-time analysis of the proposed method and is therefore not required for the latter. Nevertheless, we provide convergence proofs for ITEM ODE because the continuous- and discrete-time arguments proceed in parallel, offering intuitive insight into the otherwise intricate algebraic manipulations in the discrete-time proof.

We consider the unconstrained minimization problem $\min_{x\in\RR^d} f(x)$, where $f$ is differentiable and $\mu$-convex with $\mu\ge0$.
We examine the following first-order reformulation of ITEM ODE~\eqref{ITEMODE}:
\begin{equation}
    \simulparen{
        \dot{x}(t) &= 2\sqrt{\mu}\coth(\sqrt{\mu}t)(v(t)-x(t)),\\
        \dot{v}(t) &= \frac{\tanh(\sqrt{\mu}t)}{\sqrt{\mu}}\paren*{\mu(x(t)-v(t))-\nabla f(x(t))},
    }
    \qquad x(0)=v(0)=x_0\in\RR^d.
    \label{ITEM1}
\end{equation}
The coefficients in this ODE are interpreted by continuity when $\mu=0$. In particular,
\[
    2\sqrt{\mu}\coth(\sqrt{\mu}t)\to\frac{2}{t},
    \qquad
    \frac{\tanh(\sqrt{\mu}t)}{\sqrt{\mu}}\to t
\]
as $\mu\to0$.

Following~\cite{USM24}, we define the Lyapunov function
\begin{equation}
    E(t)=\frac{\sinh^2(\sqrt{\mu}t)}{\mu}\paren*{f(x(t))-f^\star-\frac{\mu}{2}\normm{x(t)-x^\star}}+\cosh^2(\sqrt{\mu}t)\normm{v(t)-x^\star},
    \label{lyapuc}
\end{equation}
where the expression is again interpreted by continuity when $\mu=0$.
The following lemma states that $E$ is nonincreasing. This property immediately yields
\begin{align}
    f(x(t))-f^\star-\frac{\mu}{2}\normm{x(t)-x^\star}
    &\overset{(*)}{\le} \frac{\mu E(t)}{\sinh^2(\sqrt{\mu}t)}
    \le \frac{\mu E(0)}{\sinh^2(\sqrt{\mu}t)}
    =\frac{\mu\normm{x_0-x^\star}}{\sinh^2(\sqrt{\mu}t)},
    \label{lyapu2conv}\\
    \normm{v(t)-x^\star}
    &\overset{(*)}{\le} \frac{E(t)}{\cosh^2(\sqrt{\mu}t)}
    \le \frac{E(0)}{\cosh^2(\sqrt{\mu}t)}
    =\frac{\normm{x_0-x^\star}}{\cosh^2(\sqrt{\mu}t)}.
    \label{lyapu2convv}
\end{align}
Here, the first inequalities marked by $(*)$ follow from the nonnegativity of the two terms defining $E(t)$. In particular, the $\mu$-convexity of $f$ implies $f(x(t))-f^\star-\frac{\mu}{2}\normm{x(t)-x^\star}\ge0$.

\begin{lemma}[\cite{USM24}]\label{prop:ITEMODE}
    Let $f\colon\RR^d\to\RR$ be differentiable and $\mu$-convex with $\mu\ge0$, and let $x^\star$ be a minimizer of $f$. Then, for a solution $(x,v)$ of ODE~\eqref{ITEM1}, the Lyapunov function $E$ defined by~\eqref{lyapuc} is nonincreasing. Consequently, inequalities~\eqref{lyapu2conv} and~\eqref{lyapu2convv} hold.
\end{lemma}

In \cref{sec:convergence}, we establish a discrete-time counterpart of this lemma.
To highlight the correspondence between the continuous- and discrete-time analyses, we present its proof here, although the argument is essentially the same as that in~\cite{USM24}.

\begin{proof}
    In this proof, we suppress the argument $t$ of $x(t)$ and $v(t)$.
    We show that $\dot{E}(t)\le0$.
    \begin{align}
        \MoveEqLeft \dot{E}(t)
        = \frac{2\sinh(\sqrt{\mu}t)\cosh(\sqrt{\mu}t)}{\sqrt{\mu}}\paren*{f(x)-f^\star-\frac{\mu}{2}\normm{x-x^\star}}\\
        &\quad +\frac{\sinh^2(\sqrt{\mu}t)}{\mu}\paren*{\inpr{\nabla f(x)}{\dot{x}}-\mu\inpr{x-x^\star}{\dot{x}}}\\
        &\quad +2\sqrt{\mu}\sinh(\sqrt{\mu}t)\cosh(\sqrt{\mu}t)\normm{v-x^\star}
        +2\cosh^2(\sqrt{\mu}t)\inpr{v-x^\star}{\dot{v}}\\
        &=\frac{2\sinh(\sqrt{\mu}t)\cosh(\sqrt{\mu}t)}{\sqrt{\mu}}
        \bigg(
            f(x)-f^\star-\frac{\mu}{2}\normm{x-x^\star}
            +\inpr{\nabla f(x)-\mu(x-x^\star)}{v-x}\\
        &\hspace{120pt}
            +\inpr*{v-x^\star}{\mu(v-x^\star)+\mu(x-v)-\nabla f(x)}
        \bigg)\\
        &=\frac{2\sinh(\sqrt{\mu}t)\cosh(\sqrt{\mu}t)}{\sqrt{\mu}}
        \paren*{f(x)-f^\star-\frac{\mu}{2}\normm{x-x^\star}
        +\inpr{\nabla f(x)-\mu(x-x^\star)}{x^\star-x}}\\
        &=\frac{2\sinh(\sqrt{\mu}t)\cosh(\sqrt{\mu}t)}{\sqrt{\mu}}
        \paren*{f(x)-f^\star-\inpr{\nabla f(x)}{x-x^\star}
        +\frac{\mu}{2}\normm{x-x^\star}}\\
        &\le0,
    \end{align}
    where the second equality follows from the equations for $\dot{x}$ and $\dot{v}$ in~\eqref{ITEM1}, and the last inequality follows from the $\mu$-convexity of $f$.
\end{proof}

By combining~\eqref{lyapu2conv} with a bound on $\norm{x(t)-x^\star}$, we obtain a convergence rate for $f(x(t))-f^\star$ in the following theorem. A similar technique can be found in~\cite{ADR22}.

\begin{theorem}
    Let $f$ be differentiable and $\mu$-convex with $\mu\ge0$. Then, for a solution $(x,v)$ of ODE~\eqref{ITEM1}, we have
    \begin{align}
        f(x(t))-f^\star
        &\le \paren*{1+2\tanh^2\paren*{\frac{\sqrt{\mu}}{2}t}}
        \frac{\mu}{\sinh^2(\sqrt{\mu}t)}
        \normm{x_0-x^\star}.
        \label{thm2first}
    \end{align}
    This implies the convergence rate~\eqref{thm2second}.
\end{theorem}

\begin{proof}
    When $\mu=0$, inequality~\eqref{thm2first} follows directly from~\eqref{lyapu2conv}, where $\mu/\sinh^2(\sqrt{\mu}t)$ is interpreted as $1/t^2$. Hence, we assume that $\mu>0$ in the remainder of the proof.

    Let $y(t)\coloneqq\sinh^2(\sqrt{\mu}t)(x(t)-x^\star)$.
    Then, by the first equation of~\eqref{ITEM1}, we have
    \begin{align}
        \dot{y}(t)
        &=2\sqrt{\mu}\sinh(\sqrt{\mu}t)\cosh(\sqrt{\mu}t)(x(t)-x^\star)
        +\sinh^2(\sqrt{\mu}t)\dot{x}(t)\\
        &=2\sqrt{\mu}\sinh(\sqrt{\mu}t)\cosh(\sqrt{\mu}t)
        \paren*{x(t)-x^\star+\frac{1}{2\sqrt{\mu}}\tanh(\sqrt{\mu}t)\dot{x}(t)}\\
        &=2\sqrt{\mu}\sinh(\sqrt{\mu}t)\cosh(\sqrt{\mu}t)(v(t)-x^\star).
    \end{align}
    Together with~\eqref{lyapu2convv}, this yields
    \begin{equation}
        \norm{\dot{y}(t)}
        =2\sqrt{\mu}\sinh(\sqrt{\mu}t)\cosh(\sqrt{\mu}t)\norm{v(t)-x^\star}
        \le2\sqrt{\mu}\sinh(\sqrt{\mu}t)\norm{x_0-x^\star}.
    \end{equation}
    Thus, noting that $y(0)=0$, we obtain
    \begin{align}
        \MoveEqLeft
        \norm{x(t)-x^\star}
        =\norm*{\frac{y(t)}{\sinh^2(\sqrt{\mu}t)}}
        =\frac{1}{\sinh^2(\sqrt{\mu}t)}
        \norm*{\int_0^t\dot{y}(s)\dd s}\\
        &\le\frac{1}{\sinh^2(\sqrt{\mu}t)}
        \int_0^t\norm{\dot{y}(s)}\dd s\\
        &\le\frac{2\sqrt{\mu}}{\sinh^2(\sqrt{\mu}t)}
        \int_0^t\sinh(\sqrt{\mu}s)\dd s\,\norm{x_0-x^\star}\\
        &=\frac{2(\cosh(\sqrt{\mu}t)-1)}{\sinh^2(\sqrt{\mu}t)}
        \norm{x_0-x^\star}.
    \end{align}
    Squaring this inequality and using the identities
    \[
        \sinh(\sqrt{\mu}t)=2\sinh\paren*{\frac{\sqrt{\mu}}{2}t}\cosh\paren*{\frac{\sqrt{\mu}}{2}t},
        \qquad
        \cosh(\sqrt{\mu}t)-1=2\sinh^2\paren*{\frac{\sqrt{\mu}}{2}t}
    \]
    (see \cref{sec:notation}), we obtain
    \begin{align}
        \frac{\mu}{2}\normm{x(t)-x^\star}
        &\le\frac{\mu}{2}
        \frac{\paren*{4\sinh^2\paren*{\frac{\sqrt{\mu}}{2}t}}^2}
        {\sinh^2(\sqrt{\mu}t)
        \paren*{2\sinh\paren*{\frac{\sqrt{\mu}}{2}t}
        \cosh\paren*{\frac{\sqrt{\mu}}{2}t}}^2}
        \normm{x_0-x^\star}\\
        &=\frac{2\mu\tanh^2\paren*{\frac{\sqrt{\mu}}{2}t}}
        {\sinh^2(\sqrt{\mu}t)}
        \normm{x_0-x^\star}.
    \end{align}
    Combining this inequality with~\eqref{lyapu2conv} yields~\eqref{thm2first}.

    Finally, inequality~\eqref{thm2second} follows from
    \begin{gather}
        1+2\tanh^2\paren*{\frac{\sqrt{\mu}}{2}t}\le3,
        \qquad
        \mu t^2\le\sinh^2(\sqrt{\mu}t),
        \qquad
        \frac{1}{\sinh^2(\sqrt{\mu}t)}
        =\frac{4\e^{-2\sqrt{\mu}t}}{(1-\e^{-2\sqrt{\mu}t})^2}.
    \end{gather}
    For details on these inequalities, see \cref{sec:notation}.
\end{proof}

\section{Derivation of the algorithm by discretizing ITEM ODE}\label{sec:discretization}

In this section, we discretize ITEM ODE~\eqref{ITEM1} using a weak discrete gradient (wDG)~\cite{USM23} and thereby derive our algorithm.
The discretization is designed so that the discrete-time convergence proof parallels the continuous-time proof of \cref{prop:ITEMODE}.
The proof of \cref{prop:ITEMODE} relies on three ingredients:
(i) the chain rule $\dd f(x)/\dd t=\inpr{\nabla f(x)}{\dot{x}}$,
(ii) the ODE itself, and
(iii) the $\mu$-convexity inequality involving $\nabla f$.

In the discrete-time analysis, item~(ii) is replaced by the definition of the discretized scheme.
Item~(i), however, has no exact analogue because differentiation is inherently a continuous-time operation.
Instead, we employ a weak discrete gradient $\WDG f$, which provides a discrete surrogate for the chain rule.
Moreover, $\WDG f$ is compatible with item~(iii): an analogue of the $\mu$-convexity inequality holds when $\nabla f$ is replaced by $\WDG f$.
These properties allow us to reproduce the structure of the continuous-time proof in the discrete-time setting.

The following definition extends the notion of a weak discrete gradient to nonsmooth functions.

\begin{definition}[Weak discrete gradient (cf.~\cite{USM23})]\label{def:wdg}
    For a proper convex function $f\colon\RR^d\to\RR\cup\{+\infty\}$, a \emph{weak discrete gradient} $\WDG f\colon\RR^d\times\RR^d\rightrightarrows\RR^d$ is a set-valued map for which there exist parameters $\alpha\ge0$ and $\beta,\gamma\in\RR$ satisfying $\beta+\gamma\ge0$ such that, for all $x,y,z\in\RR^d$,
    \begin{align}
        f(y)-f(x) &\le \inpr*{u}{y-x}+\frac{\alpha}{2}\normm{y-z}-\frac{\beta}{2}\normm{z-x}-\frac{\gamma}{2}\normm{y-x}
         \text{ for all } u\in\WDG f(y,z),\label{wdgsc}\\
        \WDG f(x,x) &= \partial f(x).
    \end{align}
\end{definition}

For problem~\eqref{problem}, we define
\begin{equation}
    \WDG f(y,z)\coloneqq\nabla g(z)+\partial h(y).
    \label{ITEMwdg}
\end{equation}
Before verifying that this map is a weak discrete gradient, we explain how its definition corresponds to forward-backward splitting.
As a simple example, consider the gradient flow
\[
    \dot{x}(t)\in-\nabla g(x(t))-\partial h(x(t))
\]
and its discretization using the weak discrete gradient~\eqref{ITEMwdg}:
\begin{equation}
    x_{k+1}=x_k-\eta u,
    \qquad
    u\in\WDG f(x_{k+1},x_k)=\nabla g(x_k)+\partial h(x_{k+1}),
    \label{PGwdg}
\end{equation}
where $\eta>0$ is the step size.
Whenever $1+\eta\mu_h>0$, the proximal subproblem is strongly convex, and the proximal operator is characterized by
\begin{equation}
    \xi_{k+1}=\xi_k-\eta u,\quad u\in\partial h(\xi_{k+1})
    \quad\Longleftrightarrow\quad
    \xi_{k+1}=\prox_{\eta h}(\xi_k).
    \label{im2prox}
\end{equation}
whenever the proximal map is well-defined.
Therefore, under this condition, discretization~\eqref{PGwdg} is equivalent to the standard forward-backward splitting method
\[
    x_{k+1}=\prox_{\eta h}(x_k-\eta\nabla g(x_k)).
\]
Our accelerated forward-backward splitting method is derived by applying this weak discrete gradient to ITEM ODE rather than to the gradient flow.

The next lemma shows that~\eqref{ITEMwdg} is a weak discrete gradient with parameters $(\alpha,\beta,\gamma)=(L_g,\mu_g,\mu_h)$.
The proof follows the same line of reasoning as that in~\cite{USM23}, except that we allow $\mu_g$ and $\mu_h$ to be negative, a case excluded in~\cite{USM23}.

\begin{lemma}[cf.~\cite{USM23}]\label{lem:wdg}
    Let $g\colon\RR^d\to\RR$ be differentiable and $\mu_g$-convex with an $L_g$-Lipschitz-continuous gradient, and let $h\colon\RR^d\to\RR\cup\{+\infty\}$ be proper, lower semicontinuous, and $\mu_h$-convex. Suppose that $\mu_g+\mu_h\ge0$, and let $f\coloneqq g+h$. Then, the map $\WDG f$ defined by~\eqref{ITEMwdg} is a weak discrete gradient of $f$ with parameters $(\alpha,\beta,\gamma)=(L_g,\mu_g,\mu_h)$.
\end{lemma}

\begin{proof}
    For any $x,y,z\in\RR^d$ and $\eta\in\partial h(y)$, the following inequalities hold (cf.~\cite[Lemma~2.1]{DD19} and \cite[Appendix~A]{ADA21}):
    \begin{align}
        g(y)-g(z) &\le \inpr{\nabla g(z)}{y-z}+\frac{L_g}{2}\normm{y-z},\\
        g(z)-g(x) &\le \inpr{\nabla g(z)}{z-x}-\frac{\mu_g}{2}\normm{z-x},\\
        h(y)-h(x) &\le \inpr{\eta}{y-x}-\frac{\mu_h}{2}\normm{y-x}.
    \end{align}
    Adding these inequalities yields~\eqref{wdgsc}. Moreover,
    \[
        \WDG f(x,x)=\nabla g(x)+\partial h(x)=\partial f(x),
    \]
    which proves the second condition in \cref{def:wdg}.
\end{proof}

In the following discretization and convergence analysis, we use the specific weak discrete gradient defined by~\eqref{ITEMwdg}.
However, the results remain valid for general weak discrete gradients by replacing $L_g$, $\mu_g$, $\mu_h$, and $\mu$ with $\alpha$, $\beta$, $\gamma$, and $\beta+\gamma$, respectively.

We now consider a discretization of ITEM ODE~\eqref{ITEM1}, where $x_k$ and $v_k$ approximate $x(t_k)$ and $v(t_k)$, respectively, and $z_k$ is an auxiliary variable.
The motivation for introducing $z_k$ will become clear in the subsequent convergence analysis.
We consider the scheme
\begin{equation}
    \simulparen{
        x_{k+1}-x_k &= \frac{\sinh^2(\sqrt{\mu}\,t_{k+1})-\sinh^2(\sqrt{\mu}\,t_k)}{\sinh^2(\sqrt{\mu}\,t_k)}(v_{k+1}-x_{k+1}),\\
        v_{k+1}-v_k &= \frac{\cosh^2(\sqrt{\mu}\,t_{k+1})-\cosh^2(\sqrt{\mu}\,t_k)}{2\mu\cosh^2(\sqrt{\mu}\,t_k)}
        \left(\mu_gz_k+\mu_hx_{k+1}-\mu v_{k+1}-u\right),\\
        &\hspace{100pt}u\in\WDG f(x_{k+1},z_k)=\nabla g(z_k)+\partial h(x_{k+1}),\\
        z_k-x_k &= \frac{\sinh^2(\sqrt{\mu}\,t_{k+1})-\sinh^2(\sqrt{\mu}\,t_k)}{\sinh^2(\sqrt{\mu}\,t_{k+1})}(v_k-x_k),
    }
    \label{ITEMscheme}
\end{equation}
with $t_0=0$ and $v_0=x_0\in\RR^d$.
At $k=0$, since $t_0=0$, the first equation of~\eqref{ITEMscheme} is interpreted as $x_1=v_1$.
Note that this discretization is well-defined in the limit $\mu\to0$.

The following observations show that~\eqref{ITEMscheme} can be regarded as a consistent discretization of~\eqref{ITEM1}.
\begin{itemize}
    \item As $t_{k+1}\to t_k$, we have
    \begin{align}
        \frac{x_{k+1}-x_k}{t_{k+1}-t_k}
        &\approx\frac{x(t_{k+1})-x(t_k)}{t_{k+1}-t_k}
        \to\dot{x}(t_k),\\
        \frac{\sinh^2(\sqrt{\mu}\,t_{k+1})-\sinh^2(\sqrt{\mu}\,t_k)}
        {(t_{k+1}-t_k)\sinh^2(\sqrt{\mu}\,t_k)}
        &\to\frac{2\sqrt{\mu}\sinh(\sqrt{\mu}\,t_k)\cosh(\sqrt{\mu}\,t_k)}
        {\sinh^2(\sqrt{\mu}\,t_k)}
        =2\sqrt{\mu}\coth(\sqrt{\mu}\,t_k).
    \end{align}
    Therefore, the first equation of~\eqref{ITEMscheme} discretizes
    \[
        \dot{x}(t_k)=2\sqrt{\mu}\coth(\sqrt{\mu}\,t_k)(v(t_k)-x(t_k)),
    \]
    which is the first equation of~\eqref{ITEM1} evaluated at $t=t_k$.

    \item By a similar argument, the second equation of~\eqref{ITEMscheme} discretizes
    \begin{equation}
        \dot{v}(t_k)\in\frac{\tanh(\sqrt{\mu}\,t_k)}{\sqrt{\mu}}
        \paren*{\mu_gz_k+\mu_hx(t_k)-\mu v(t_k)-\WDG f(x(t_k),z_k)}.
        \label{d2c2}
    \end{equation}

    \item The last equation of~\eqref{ITEMscheme} implies that $z_k\to x_k\approx x(t_k)$ as $t_{k+1}\to t_k$. Therefore, using $\mu=\mu_g+\mu_h$, equation~\eqref{d2c2} converges to the second equation of~\eqref{ITEM1} evaluated at $t=t_k$.
\end{itemize}

The time sequence $\setE{t_k}_{k=0}^{\infty}$ is defined recursively by choosing each $t_{k+1}$ as the largest value satisfying
\begin{equation}
    (L_g-\mu_g)(A_{k+1}-A_k)^2-2(1+\mu A_k)A_{k+1}\le0,
    \qquad
    A_k\coloneqq\frac{\sinh^2(\sqrt{\mu}\,t_k)}{\mu},
    \label{A}
\end{equation}
where when $\mu=0$, $A_k$ is interpreted by continuity as $A_k=t_k^2$.
This choice is motivated by the subsequent convergence analysis, in which we derive condition~\eqref{A} to ensure that the discrete-time Lyapunov function is nonincreasing and establish a convergence rate of $\Order{1/A_k}$.
When $L_g>\mu_g$, setting the inequality in~\eqref{A} to equality and solving for the largest admissible value of $A_{k+1}$ yields
\begin{equation}
    A_{k+1}=\frac{(L_g+\mu_h)A_k+1+\sqrt{\mu(2L_g-\mu_g + \mu_h)A_k^2+2(L_g+\mu_h)A_k+1}}{L_g-\mu_g}.
    \label{Aschedule}
\end{equation}
When $L_g=\mu_g$, $A_1$ can be chosen arbitrarily large while satisfying~\eqref{A}.
We therefore formally set $t_1=\infty$ and interpret scheme~\eqref{ITEMscheme} in the limit $t_1\to\infty$.
Then, $x_1$ is an optimal solution; see \cref{app:Lmu} for details.

The update~\eqref{ITEMscheme} with~\eqref{Aschedule} can be rewritten as the explicit algorithm presented in \cref{alg} by using the identities $\sinh^2(\sqrt{\mu}\,t_k)=\mu A_k$ and $\cosh^2(\sqrt{\mu}\,t_k)=1+\mu A_k$ and then solving~\eqref{ITEMscheme} for $x_{k+1}$ and $v_{k+1}$ via the proximal-map characterization~\eqref{im2prox}.

\begin{algorithm}[H]
    \caption{SR2 Strongly Convex FISTA}
    \label{alg}
    \begin{algorithmic}[1]
        \State \textbf{Input:} $x_0$, $L_g$, $\mu_g$, $\mu_h$, $K$
        \State Initialize $v_0\gets x_0$, $A_0\gets0$, and $\mu\gets\mu_g+\mu_h$
        \For{$k=0,1,2,\ldots,K-1$}
            \State $A_{k+1}\gets\frac{(L_g+\mu_h)A_k+1+\sqrt{(\mu_g+\mu_h)(2L_g-\mu_g+\mu_h)A_k^2+2(L_g+\mu_h)A_k+1}}{L_g-\mu_g}$ \label{algA}
            \State $B_{k+1}\gets\frac{A_{k+1}}{A_{k+1}-A_k}+\frac{\mu_gA_{k+1}+\mu_hA_k}{2(1+\mu A_k)}$ \label{line:6}
            \State $z_k\gets x_k+\frac{A_{k+1}-A_k}{A_{k+1}}(v_k-x_k)$
            \State $y_{k+1}\gets\bigl[\bigl(\frac{A_k}{A_{k+1}-A_k}+\frac{\mu A_k}{2(1+\mu A_k)}\bigr)x_k+v_k+\frac{A_{k+1}-A_k}{2(1+\mu A_k)}(\mu_gz_k-\nabla g(z_k))\bigr]/B_{k+1}$
            \State $x_{k+1}\gets\prox_{\frac{A_{k+1}-A_k}{2(1+\mu A_k)B_{k+1}}h}(y_{k+1})$ \label{line:9}
            \State $v_{k+1}\gets x_{k+1}+\frac{A_k}{A_{k+1}-A_k}(x_{k+1}-x_k)$
        \EndFor
        \State \textbf{Output:} $x_K$
    \end{algorithmic}
\end{algorithm}

Because $h$ may be $(-\mu_h)$-weakly convex when $\mu_h<0$, it is not a priori clear that the proximal operator $\prox_{\eta_{k+1}h}$ used in Line~\ref{line:9} is well-defined, where
\[
    \eta_{k+1}\coloneqq\frac{A_{k+1}-A_k}{2(1+\mu A_k)B_{k+1}}.
\]
Since $h$ is $\mu_h$-convex, the function
$\eta_{k+1}h(\cdot)+\frac{1}{2}\normm{\cdot-x}$
is $(1+\eta_{k+1}\mu_h)$-strongly convex.
Therefore, the proximal operator is well-defined and single-valued whenever
$1+\eta_{k+1}\mu_h>0$.
A direct calculation, given in \cref{app:welldef}, shows that this condition is equivalent to $2+\mu(A_k+A_{k+1})>0$, which always holds because $\mu\ge0$ and $A_k,A_{k+1}\ge0$.

\section{Convergence analysis of \cref{alg}}\label{sec:convergence}
In this section, we establish the convergence rate of \cref{alg}.

We introduce the discrete-time Lyapunov function
\begin{equation}
    E_k = \frac{\sinh^2(\sqrt{\mu}\,t_k)}{\mu}\paren*{f(x_k) - f^\star - \frac{\mu}{2}\normm{x_k-x^\star}} + \cosh^2(\sqrt{\mu}\,t_k) \normm{v_k-x^\star}. \label{lyapu}
\end{equation}
The following lemma shows that the sequence $\setE{E_k}$ is nonincreasing. This monotonicity immediately yields the following two convergence bounds:
\begin{align}
    f(x_k) - f^\star - \frac{\mu}{2}\normm{x_k - x^\star} &\le \frac{\mu\normm{x_0 - x^\star}}{\sinh^2(\sqrt{\mu}\,t_k)}, 
    \label{lyapu2convd}\\
    \normm{v_k-x^\star} &\le \frac{\normm{x_0 - x^\star}}{\cosh^2(\sqrt{\mu}\,t_k)}.\label{lyapu2convvd}
\end{align}

\begin{table}[htbp]
\caption{Abbreviations for the notation used in \cref{sec:convergence}.}
\label{table:abb}
\everymath{\displaystyle}
\begin{tabular}{cccc}
    \toprule
    \multicolumn{2}{c}{Step $k$} & \multicolumn{2}{c}{Step $k+1$} \\
    \cmidrule(r){1-2}\cmidrule(l){3-4}
    Notation & Abbreviation & Notation & Abbreviation \\
    \midrule
    $x_k$                                    & $x$        & $x_{k+1}$                                    & $x^+$        \\
    $v_k$                                    & $v$        & $v_{k+1}$                                    & $v^+$        \\
    $\sinh^2(\sqrt{\mu}\,t_k)$      & $\sinh^2$  & $\sinh^2(\sqrt{\mu}\,t_{k+1})$      & $\sinh^{2+}$ \\
    $\cosh^2(\sqrt{\mu}\,t_k)$      & $\cosh^2$  & $\cosh^2(\sqrt{\mu}\,t_{k+1})$      & $\cosh^{2+}$ \\
    $\cosh(\sqrt{\mu}\,t_k)$        & $\cosh$    & $\cosh(\sqrt{\mu}\,t_{k+1})$        & $\cosh^+$    \\
    \bottomrule
\end{tabular}
\end{table}

For readability, we simplify the notation of~\eqref{ITEMscheme} and~\eqref{lyapu} by writing $\WDG f(x_{k+1}, z_k)$ for an arbitrary element $u \in \WDG f(x_{k+1}, z_k)$ and using the shorthands summarized in \cref{table:abb}, thereby rewriting~\eqref{ITEMscheme} and~\eqref{lyapu} as
\begin{equation}
    \simulparen{
        x^+ - x &= \frac{\sinh^{2+} -\sinh^2}{\sinh^2} (v^+ - x^+),\\
        v^+ - v &= \frac{\cosh^{2+} -\cosh^2}{2\mu \cosh^2}\paren*{\mu_g z + \mu_h x^+ - \mu v^{+} - \WDG f(x^+,z)},\\
        z - x &= \frac{\sinh^{2+} - \sinh^2}{\sinh^{2+}} (v - x), \label{ITEMSchemeabb}
    }
\end{equation}
and
\begin{equation}
    E_k = \frac{\sinh^2}{\mu}\paren*{f(x) - f^\star - \frac{\mu}{2}\normm{x-x^\star}} + \cosh^2 \normm{v-x^\star},
\end{equation}
respectively.

\begin{lemma}\label{thm:convergence}
    Let $f=g+h$ be defined as in \cref{sec1}, and let $(x_{k+1},v_{k+1})$ be obtained from $(x_k,v_k)$ by the update~\eqref{ITEMscheme} at times $t_k$ and $t_{k+1}$ satisfying condition~\eqref{A}. Then, $E_{k+1}\le E_k$.
    Consequently, inequalities~\eqref{lyapu2convd} and~\eqref{lyapu2convvd} hold.
\end{lemma}
\begin{proof}
    We show that $E_{k+1} - E_k \le 0$ by analogy with the continuous counterpart $\dot{E} \le 0$ in the proof of \cref{prop:ITEMODE}.
    \begin{align}
        \MoveEqLeft E_{k+1} - E_k\\
        &= \frac{\sinh^{2+}-\sinh^2}{\mu}\paren*{f(x^+) - f^\star - \frac{\mu}{2}\normm{x^+-x^\star}} 
        \\
        &\quad + \frac{\sinh^2}{\mu} \paren*{f(x^+) - f(x) - \frac{\mu}{2}(\normm{x^+ - \xs} - \normm{x -\xs})}\\
        &\quad + (\cosh^{2+} - \cosh^2)\cdot \normm{v^+ - x^\star} + \cosh^2 \cdot\, (\normm{v^+ - \xs} - \normm{v - \xs})\\
        &\overset{(\circ)}{\le} \frac{\sinh^{2+}-\sinh^2}{\mu}\paren*{f(x^+) - f^\star - \frac{\mu}{2}\normm{x^+-x^\star}}\\
        &\quad + \frac{\sinh^2}{\mu} \left( \inpr{\WDG f(x^+,z)}{x^+ - x} + \frac{L_g}{2}\normm{x^+ - z} - \frac{\mu_g}{2}\normm{z - x} - \frac{\mu_h}{2}\normm{x^+ - x}\right.\\
        &\qquad\qquad -\left. \mu\paren*{\inpr{x^+-\xs}{x^+-x} - \frac12\normm{x^+-x}} \right)\\
        &\quad + (\cosh^{2+} - \cosh^2) \cdot \normm{v^+ - x^\star} + \cosh^2 \cdot\, \paren*{2\inpr{v^+-\xs}{v^+-v} - \normm{v^+-v}}\\
        &= \frac{\sinh^{2+}-\sinh^2}{\mu}\paren*{f(x^+) - f^\star - \frac{\mu}{2}\normm{x^+-x^\star}}\\
        &\quad + \frac{\sinh^2}{\mu}\inpr{\WDG f(x^+,z) - \mu(x^+-\xs)}{x^+ - x}\\
        &\quad + (\cosh^{2+} - \cosh^2) \cdot \inpr*{v^+ - x^\star}{v^+ - x^\star + \frac{2\cosh^2}{\cosh^{2+}-\cosh^2}(v^+-v)}\\
        &\quad + \frac{\sinh^2}{\mu}\paren*{\frac{L_g}{2}\normm{x^+ - z} - \frac{\mu_g}{2}\normm{z - x} + \frac{\mu_g}{2}\normm{x^+ - x}} - \cosh^2 \cdot\, \normm{v^+-v}\\
        &\overset{(\bullet)}{=} \frac{\sinh^{2+}-\sinh^2}{\mu}\bigg( f(x^+) - f^\star - \frac{\mu}{2}\normm{x^+-x^\star} \\
        &\qquad\qquad + \inpr{\WDG f(x^+,z) - \mu(x^+-\xs)}{v^+ - x^+}\\
        &\qquad\qquad + \inpr*{v^+ - x^\star}{\mu(v^+ - x^\star) + \mu_g z + \mu_h x^+ - \mu v^{+} - \WDG f(x^+,z)} \bigg)\\
        &\quad + \frac{\sinh^2}{\mu}\paren*{\frac{L_g}{2}\normm{x^+ - z} - \frac{\mu_g}{2}\normm{z - x} + \frac{\mu_g}{2}\normm{x^+ - x}} - \cosh^2 \cdot\, \normm{v^+-v}\\
        &= \frac{\sinh^{2+}-\sinh^2}{\mu}\left(f(x^+) - f^\star - \frac{\mu}{2}\normm{x^+-x^\star} - \inpr{\WDG f(x^+,z)}{x^+ - x^\star} \right.\\
        &\qquad\quad \left. - \mu\inpr{x^+-\xs}{v^+ - x^+} + \inpr{v^+ - x^\star}{\mu_g (z - x^\star) + \mu_h (x^+ - x^\star)} \right)\\
        &\quad + \frac{\sinh^2}{\mu}\paren*{\frac{L_g}{2}\normm{x^+ - z} - \frac{\mu_g}{2}\normm{z - x} + \frac{\mu_g}{2}\normm{x^+ - x}} - \cosh^2 \cdot\,  \normm{v^+-v}\\
        &= \frac{\sinh^{2+}-\sinh^2}{\mu}\left(f(x^+) - f^\star - \inpr{\WDG f(x^+,z)}{x^+ - x^\star} + \frac{\mu_g}{2}\normm{z-x^\star} + \frac{\mu_h}{2}\normm{x^+-x^\star} \right.\\
        &\qquad\quad \left. +\mu_g\paren*{ - \frac{1}{2}\normm{x^+-x^\star} - \inpr{x^+-\xs}{v^+ - x^+} + \inpr{v^+ - x^\star}{z - x^\star} - \frac1{2}\normm{z-x^\star}}\right)\\
        &\quad  + \frac{\sinh^2}{\mu}\paren*{\frac{L_g}{2}\normm{x^+ - z} - \frac{\mu_g}{2}\normm{z - x} + \frac{\mu_g}{2}\normm{x^+ - x}} - \cosh^2 \cdot\,  \normm{v^+-v}\\
        &\overset{(\star)}{\le} \frac{\sinh^{2+}-\sinh^2}{\mu}\paren*{\frac{L_g}{2} \normm{x^+ -z} + \frac{\mu_g}{2}\normm{v^+-x^+} - \frac{\mu_g}{2}\normm{v^+-z}}\\
        &\quad  + \frac{\sinh^2}{\mu}\paren*{\frac{L_g}{2}\normm{x^+ - z} - \frac{\mu_g}{2}\normm{z - x} + \frac{\mu_g}{2}\normm{x^+ - x}} - \cosh^2 \cdot\,  \normm{v^+-v}\\
        &\eqqcolon (\text{err}),\\
    \end{align}
    where, in the inequalities marked by $(\circ)$ and $(\star)$, we use the wDG inequality~\eqref{wdgsc} with $(\alpha,\beta,\gamma)=(L_g,\mu_g,\mu_h)$ from \cref{lem:wdg}. Specifically, we use the following two forms, respectively: 
    \begin{equation}
        f(x^+) -f(x) \le \inpr{\WDG f(x^+,z)}{x^+ - x} + \frac{L_g}{2}\normm{x^+ - z} - \frac{\mu_g}{2}\normm{z - x} - \frac{\mu_h}{2}\normm{x^+ - x}, \label{wdg1}
    \end{equation}
    which is a discrete analogue of the chain rule, and 
    \begin{equation}
        f(x^+) - f(x^\star) - \inpr{\WDG f(x^+,z)}{x^+ - x^\star} + \frac{\mu_g}{2}\normm{z-x^\star} + \frac{\mu_h}{2}\normm{x^+-x^\star} \le \frac{L_g}{2} \normm{x^+ - z}, \label{wdg2}
    \end{equation}
    which is a discrete analogue of the $\mu$-convex inequality.
    In the inequality marked by $(\star)$, we also use the identity
    \[
        \inpr{u_1 - u_3}{u_2 - u_3} = \frac12 \paren*{\normm{u_1 - u_3} + \normm{u_2 - u_3} - \normm{u_1 - u_2}} \quad \text{for} \quad u_1 ,u_2 ,u_3 \in \RR^d.
    \]
    The equality marked by $(\bullet)$ follows from applying the update~\eqref{ITEMSchemeabb} to $x^+ - x$ and $v^+ - v$.
    
    To continue the calculation of (err), we apply the following equality to the terms multiplied by $\mu_g$.
    \begin{lemma}\label{lem:betas}
    It holds that
        \begin{equation}
            \begin{split}
                \MoveEqLeft
                \frac{\sinh^{2+}-\sinh^2}{2} \paren*{\normm{v^+-x^+} - \normm{v^+-z}} + \frac{\sinh^2}{2}(-\normm{z-x} + \normm{x^+-x})\\
                &= -\frac{\sinh^{2+}}{2}\normm{x^+-z}.\label{lemma5}
            \end{split}  
        \end{equation}
    \end{lemma}
    \begin{proof}
        We use the identity \[ \normm{\lambda a + (1-\lambda) b} = \lambda\normm{a} + (1-\lambda)\normm{b} - \lambda(1-\lambda)\normm{a-b}\]
        for any $a, b \in \RR^d$ and $\lambda \in \RR$.
        Using the update~\eqref{ITEMSchemeabb}, we obtain
        \begin{align}
            \MoveEqLeft
            \normm{v^+ - z} = \normm*{\frac{\sinh^2}{\sinh^{2+} - \sinh^2}(x^+ - x) + x^+ - z}\\
            &= \paren*{\frac{\sinh^{2+}}{\sinh^{2+} - \sinh^2}}^2 \normm*{\frac{\sinh^2}{\sinh^{2+}}(x^+ - x) + \frac{\sinh^{2+} - \sinh^2}{\sinh^{2+}}(x^+ - z)}\\
            &= \paren*{\frac{\sinh^{2+}}{\sinh^{2+} - \sinh^2}}^2 \left(\frac{\sinh^2}{\sinh^{2+}}\normm{x^+ - x} + \frac{\sinh^{2+} - \sinh^2}{\sinh^{2+}}\normm{x^+ -z}\right. \\
            &\hspace{100pt} \left. - \frac{\sinh^2}{\sinh^{2+}}\frac{\sinh^{2+} - \sinh^2}{\sinh^{2+}}\normm{x-z}\right)\\
            &= \frac{\sinh^2\cdot\sinh^{2+}}{(\sinh^{2+} - \sinh^{2})^2}\normm{x^+ - x} + \frac{\sinh^{2+}}{\sinh^{2+} - \sinh^2}\normm*{x^+ - z} - \frac{\sinh^{2}}{\sinh^{2+} - \sinh^2}\normm*{x - z},
            \shortintertext{and}
            \MoveEqLeft
            \normm{v^+ - x^+} = \paren*{\frac{\sinh^2}{\sinh^{2+} - \sinh^2}}^2\normm{x^+ - x}.
        \end{align}
        Substituting these expression into the left-hand side of~\eqref{lemma5}, we obtain the desired result.
    \end{proof}
    We now resume the calculation of (err). By \cref{lem:betas},
    \begin{align}
        (\text{err})
        &= \frac{\sinh^{2+}}{\mu} \frac{L_g}{2}\normm{x^+ - z} - \frac{\mu_g}{\mu} \frac{\sinh^{2+}}{2}\normm{x^+-z} -\cosh^2 \cdot\, \normm{v^+-v}\\
        &= \sinh^{2+} \cdot\, \frac{L_g-\mu_g}{2\mu}\normm{x^+ - z}\\
        &\qquad -  \cosh^2 \cdot \paren*{\frac{\sinh^{2+}}{\sinh^{2+} - \sinh^2}}^2\normm*{x^+ - \frac{\sinh^2}{\sinh^{2+}}x - \frac{\sinh^{2+}-\sinh^2}{\sinh^{2+}}v}\\
        &= \paren*{\sinh^{2+} \cdot\,\frac{L_g-\mu_g}{2\mu} - \cosh^2 \cdot \paren*{\frac{\sinh^{2+}}{\sinh^{2+} - \sinh^2}}^2 } \normm{x^+-z},
    \end{align}
    where the last two equalities follow from the update rules for $x^+$ and $z$, respectively,
    and the last equality is precisely what motivates the update of $z$.
    The above error term is nonpositive whenever
    \begin{equation}
        \paren*{\frac{\sinh^{2+}}{\mu} - \frac{\sinh^2}{\mu}}^2\frac{L_g-\mu_g}{2} - \cosh^2\cdot\frac{\sinh^{2+}}{\mu} \le 0,
    \end{equation}
    which is equivalent to~\eqref{A} by introducing $A_k = \sinh^2(\sqrt{\mu}\,t_k) / \mu$ and using $\cosh^2(\sqrt{\mu}\,t_k) = 1 + \mu A_k$.
\end{proof}

In the following theorem, as in the continuous-time analysis, we derive a convergence rate for $f(x_k)-f^\star$ by combining inequality~\eqref{lyapu2convd} with a bound on $\norm{x_k-x^\star}$.

\begin{theorem}
    Let $f=g+h$ be defined as in \cref{sec1}, and let the sequence $\setE{t_k}$ be chosen so that $A_k=\sinh^2(\sqrt{\mu}\,t_k)/\mu$ satisfies condition~\eqref{A}. Then, the iterates $(x_k,v_k)$ generated by~\eqref{ITEMscheme} satisfy
    \begin{align}
        f(x_k)-f^\star &\le \paren*{1+2\tanh^2\paren*{\frac{\sqrt{\mu}}{2}t_k}}\frac{\mu}{\sinh^2(\sqrt{\mu}\,t_k)}\normm{x_0-x^\star} \label{thm6first}\\
        &\le
        \begin{cases}
            \dfrac{3}{A_k}\normm{x_0-x^\star} & \text{if } \mu>0,\\
            \dfrac{1}{A_k}\normm{x_0-x^\star} & \text{if } \mu=0.
        \end{cases}
    \end{align}
    In particular, \cref{alg}, or equivalently the update~\eqref{ITEMscheme} with $A_k$ determined by~\eqref{Aschedule}, achieves the convergence rate~\eqref{cord21}.
\end{theorem}
\begin{remark}
    The resulting convergence rate indicates that faster growth of $A_k$ leads to faster convergence. The fastest growth is achieved by choosing $A_k$ so that condition~\eqref{A} holds with equality, namely, by using the update~\eqref{Aschedule}. \cref{alg} adopts this update in Line~\ref{algA}.
\end{remark}
\begin{proof}
    When $\mu=0$, inequality~\eqref{thm6first} follows directly from~\eqref{lyapu2convd}.
    
    When $\mu > 0$, let $y_k \coloneqq \sinh^2(\sqrt{\mu}\,t_k) (x_k - x^\star)\, (= \sinh^2 \cdot\, (x - x^\star))$.
    Then, we have 
    \begin{align}
        y_{k+1}-y_k &= \sinh^{2+}\cdot\,(x^+ - x^\star) - \sinh^2\cdot\,(x-x^\star)\\
        &= (\sinh^{2+} - \sinh^2)\cdot(x^+- x^\star) + \sinh^2\cdot\,(x^+ - x)\\
        &= (\sinh^{2+} - \sinh^2)\cdot\paren*{x^+- x^\star + \frac{\sinh^2}{\sinh^{2+} - \sinh^2}(x^+ - x)}\\
        & \overset{(*)}{=} (\sinh^{2+} - \sinh^2)\cdot\paren*{x^+- x^\star + v^+ - x^+}\\
        &= (\sinh^{2+} - \sinh^2)\cdot(v^+- x^\star),
    \end{align}
    where in the equality marked by $(*)$, we apply the update~\eqref{ITEMSchemeabb} to $x^+ - x$.
    This, together with~\eqref{lyapu2convvd}, yields
    \begin{equation}
        \norm{y_{k+1} - y_k} = (\sinh^{2+} - \sinh^2)\cdot\norm{v^+ -x^\star} \le \frac{\sinh^{2+} - \sinh^2}{\cosh^+}\norm{x_0 - x^\star}.
    \end{equation}
    Here, the right-hand side can be bounded by using the inequality
    \[
        \frac{\sinh^{2+} - \sinh^2}{\cosh^+} \le 2(\cosh^+ - \cosh),
    \]
    which can readily be observed by
    \begin{align}
        &2(\cosh^+ - \cosh)\cosh^+ - (\sinh^{2+} - \sinh^2)\\
        = &2(\cosh^+ - \cosh)\cosh^+ - (\cosh^{2+} - \cosh^2)\\
        = &(\cosh^{+} - \cosh)^2 \ge 0.
    \end{align}
    This allows for the telescoping sum
    \begin{align}
    \MoveEqLeft
    \sum_{s=0}^{k-1} \norm{y_{s+1} - y_s} \le \sum_{s=0}^{k-1} 2\paren*{\cosh(\sqrt{\mu}\,t_{s+1}) - \cosh(\sqrt{\mu}\,t_s)} \norm{x_0 -x^\star}\\
    &= 2\paren*{\cosh(\sqrt{\mu}\,t_k) - \cosh(\sqrt{\mu}\,t_0)} \norm{x_0 -x^\star}\\
    &= 2\paren*{\cosh(\sqrt{\mu}\,t_k) - 1}\norm{x_0 -x^\star},
    \end{align}
    and, noting $y_0 = 0$, we obtain
    \begin{align}
        \MoveEqLeft 
        \norm{x_k - x^\star} = \norm*{\frac{y_k}{\sinh^2(\sqrt{\mu}\,t_k)}} = \frac{1}{\sinh^2(\sqrt{\mu}\,t_k)} \norm*{\sum_{s=0}^{k-1} (y_{s+1} - y_s) + y_0}\\
        &\le \frac{1}{\sinh^2(\sqrt{\mu}\,t_k)} \sum_{s=0}^{k-1} \norm{y_{s+1} - y_s}
        \le \frac{2(\cosh(\sqrt{\mu}\,t_k) - 1)}{\sinh^2(\sqrt{\mu}\,t_k)} \, \norm{x_0 - x^\star}.
    \end{align}
    Thus, by the same calculation as in \cref{prop:ITEMODE}, we obtain
    \[
        \frac{\mu}{2}\normm{x_k - x^\star} \le \frac{2\mu\tanh^2\paren*{\frac{\sqrt{\mu}}{2}t_k}}{\sinh^2(\sqrt{\mu}\,t_k)}\normm{x_0 - x^\star}.
    \]
    Combining this with~\eqref{lyapu2convd}, we obtain the first inequality in~\eqref{thm6first}.
    The second inequality follows from $1 + 2\tanh^2\paren*{\frac{\sqrt{\mu}}{2}t_k} \le 3$.

    To obtain the final convergence rate~\eqref{cord21}, we consider two estimates for the convergence rate $1/A_k$.
    First, by~\eqref{Aschedule}, we have
    \begin{align}
        A_{k+1} &\ge A_k + \frac{1}{L_g} + \sqrt{\frac{2}{L_g}A_k } \ge A_k + \frac{1}{2L_g} + \sqrt{\frac{2}{L_g}A_k} =  \paren*{\sqrt{A_k} + \frac{1}{\sqrt{2L_g}}}^2.
    \end{align}
    Since $A_0 = 0$, this implies the quadratic growth $A_k \ge (k/\sqrt{2L_g})^2$.
    Second, by~\eqref{Aschedule}, we have $A_1 = \frac{2}{L_g - \mu_g}$ and
    \begin{align}
        &A_{k+1} \ge \frac{L_g + \mu_h + \sqrt{\mu(2L_g - \mu_g + \mu_h)}}{L_g - \mu_g}A_k = rA_k, \\
        &\qquad \text{where} \quad r = \frac{1 + \frac{\mu_h}{L_g} + \sqrt{\paren*{\frac{\mu_g}{L_g}+\frac{\mu_h}{L_g}}\paren*{2 - \frac{\mu_g}{L_g} + \frac{\mu_h}{L_g}}}}{1-\frac{\mu_g}{L_g}} > 1.
    \end{align}
    This implies the exponential growth $A_k \ge \frac{2}{L_g - \mu_g}r^{k-1}$.
    Introducing $q_1 = \mu_g/L_g$ and $q_2 = \mu_h/L_g$, we obtain~\eqref{cord21}.
\end{proof}

In the usual composite strongly convex setting with $\mu_g > 0$ and $\mu_h = 0$,
\begin{equation}
    r = \frac{1 + \sqrt{2q_1 - q_1^2}}{1-q_1} > 1 +\sqrt{2q_1} + q_1, \label{q1rate}
\end{equation}
which is larger than the fastest previously known growth factor $1 + \sqrt{2q_1} - 6q_1 + \mathrm{o}(q_1)$~\cite{ADR22}.
The proof of the last inequality is provided in \cref{app:rate}.

\section{Remarks and discussion}\label{sec:remarks}
In this section, we provide several remarks on the parameters $L_g$ and $\mu$ and discuss possible directions for future research.

\bmhead{Adaptive selection of $L_g$}

In the proof of \cref{thm:convergence}, the Lipschitz constant $L_g$ is used only through the smoothness inequality
\[
    g(x^+) - g(z) \le \inpr{\nabla g(z)}{x^+ - z} + \frac{L_g}{2}\normm{x^+ - z}.
\]
Therefore, $L_g$ can be selected adaptively using a backtracking procedure that ensures that the above inequality holds at each iteration; see~\cite[Algorithm 19]{ADA21}.

\bmhead{Optimality of the convergence rates}

\begin{table}[htbp]
    \centering
    \caption{The fastest known convergence rates and the methods achieving them in each problem setting. Cases in which the displayed convergence rate is known to be exactly optimal are marked as \textit{optimal}.}
    \begin{tabular}{cccc}
        \toprule
        & Convex & \multicolumn{2}{c}{Strongly convex}\\
        \cmidrule{3-4}
        & $f(x_k) - f^\star$ & $f(x_k) - f^\star$ & $\norm{x_k - x^\star}^2$\\
        \midrule
        \multirow{2}{*}{Smooth} & $\Order{1/k^2}$ & $\Order{\e^{-2\sqrt{\mu/L}\,k}}$ & $\Order{\e^{-2\sqrt{\mu/L}\,k}}$\\
        & OGM, optimal & ITEM & ITEM, optimal\\
        \midrule
        \multirow{2}{*}{Composite} & $\Order{1/k^2}$ & $\Order{\e^{-\sqrt{2\mu/L}\,k}}$ & $\Order{\e^{-2\sqrt{\mu/L}\,k}}$\\
        & OptISTA, optimal & \cref{alg} & Prox-ITEM, optimal\\
        \bottomrule
    \end{tabular}
    \label{tab:rates}
\end{table}

The fastest known convergence rates and the methods achieving them in each problem setting are summarized in \cref{tab:rates}.
The Optimized Gradient Method (OGM)~\cite{KF16,DT14} and OptISTA~\cite{JGR25} achieve exactly optimal convergence rates in terms of $f(x_k) - f^\star$ for smooth convex and composite convex optimization, respectively.
ITEM and Prox-ITEM~\cite{UTDT26} attain exactly optimal convergence rates in terms of $\norm{x_k - x^\star}^2$ for smooth strongly convex and composite strongly convex optimization, respectively.
However, to the best of our knowledge, no method that is exactly optimal in terms of $f(x_k) - f^\star$ has yet been obtained even for smooth strongly convex optimization, let alone for composite strongly convex optimization.

The convergence rate of our algorithm, $f(x_k) - f^\star \le \Order{\e^{-\sqrt{2\mu/L}\,k}}$, is not directly comparable to that of Prox-ITEM, $\norm{x_k - x^\star}^2 \le \Order{\e^{-2\sqrt{\mu/L}\,k}}$.
Indeed, in the nonsmooth setting, $f(x_k) - f^\star$ cannot generally be bounded above by a constant multiple of $\norm{x_k - x^\star}^2$.
This contrasts with the smooth setting, in which a convergence rate for $\norm{x_k - x^\star}^2$ immediately yields one for $f(x_k) - f^\star$ through the inequality $f(x_k) - f^\star \le \frac{L}{2}\norm{x_k - x^\star}^2$.
Whether an algorithm achieving the rate $f(x_k) - f^\star \le \Order{\e^{-2\sqrt{\mu/L}\,k}}$ can be constructed for composite strongly convex optimization remains an open question.

\section{Numerical experiment}\label{sec:experiment}
In this section, we compare \cref{alg} with existing methods using a synthetic problem whose exact minimizer is known.
We consider problem~\eqref{problem} with dimension $d = 10000$.
Let $\mathbbm{1} \coloneqq (1,\ldots,1)^\top \in \RR^{5000}$ denote the all-ones vector, and for $u \in \RR^{5000}$, define $\normm{u}_A \coloneqq \inpr{u}{Au}$, where $A = \mathrm{diag} (1,2,\ldots,5000) \in \RR^{5000\times 5000}$. We write $x_{1:5000}$ and $x_{5001:10000}$ for the first and last 5000 components of $x \in \RR^d$. Define
\begin{equation}
    g(x) = \frac12 \normm{x_{1:5000} - 10\mathbbm{1}}_A + \frac12 \normm{x_{5001:10000} - 10^{-4}\mathbbm{1}}_A,\quad h(x) = \sum_{i=1}^d \MCP{x_i}{2}{3}, \label{exprob}
\end{equation}
as in~\eqref{MCP}.
In this setup, $L_g = 5000$, $\mu_g = 1$ and $\mu_h = -1/3$.
The unique minimizer is $x^\star = (10,\ldots,10,0,\ldots,0)^\top$, i.e., the concatenation of $10\mathbbm{1}$ and $\mathbf{0} \in \RR^{5000}$.

We compare \cref{alg} (SR2FISTA) with ISTA, the strongly convex FISTA~\cite{ADA21} (FISTA), and Algorithm~(4.17) in~\cite{ADR22} (ADR).
FISTA and ADR apply only when $h$ is convex. To handle weakly convex $h$, we adopt the following modification described in \cref{sec1}:
\[
\hat{g}(x) = g(x) + \frac{\mu_h}{2}\normm{x}, \qquad \hat{h}(x) = h(x) - \frac{\mu_h}{2}\normm{x},
\]
and run FISTA and ADR on $f = \hat{g} + \hat{h}$ with $L_{\hat{g}} = L_g + \mu_h$ and $\mu_{\hat{g}} = \mu_g + \mu_h$.
The proximal map of $\hat{h}$ is given by
\[
\prox_{\eta \hat{h}}(y) = \prox_{\frac{\eta}{1 - \mu_h\eta}h}\paren*{\frac{y}{1 - \mu_h\eta}}.
\]

The results are shown in \cref{fig:1}.
All accelerated methods (FISTA, ADR, and SR2FISTA) are faster than ISTA, and
SR2FISTA converges slightly faster than the other accelerated methods.
In the initial regime, FISTA and SR2FISTA outperform ADR, which is consistent with their $\Order{1/k^2}$ sublinear behavior in the transient phase.
The observed convergence of all the accelerated methods is faster than the reference rate $\Order{\e^{-\sqrt{2q}k}}$, where $q = \mu/(L_g + \mu_h)$, the approximate worst-case rate of ADR and SR2FISTA. While the worst-case rate of FISTA is $\Order{\e^{-\sqrt{q}k}}$, its observed convergence on this problem is similar to that of ADR and SR2FISTA.

\begin{figure}
    \centering
    \includegraphics[width=0.5\linewidth]{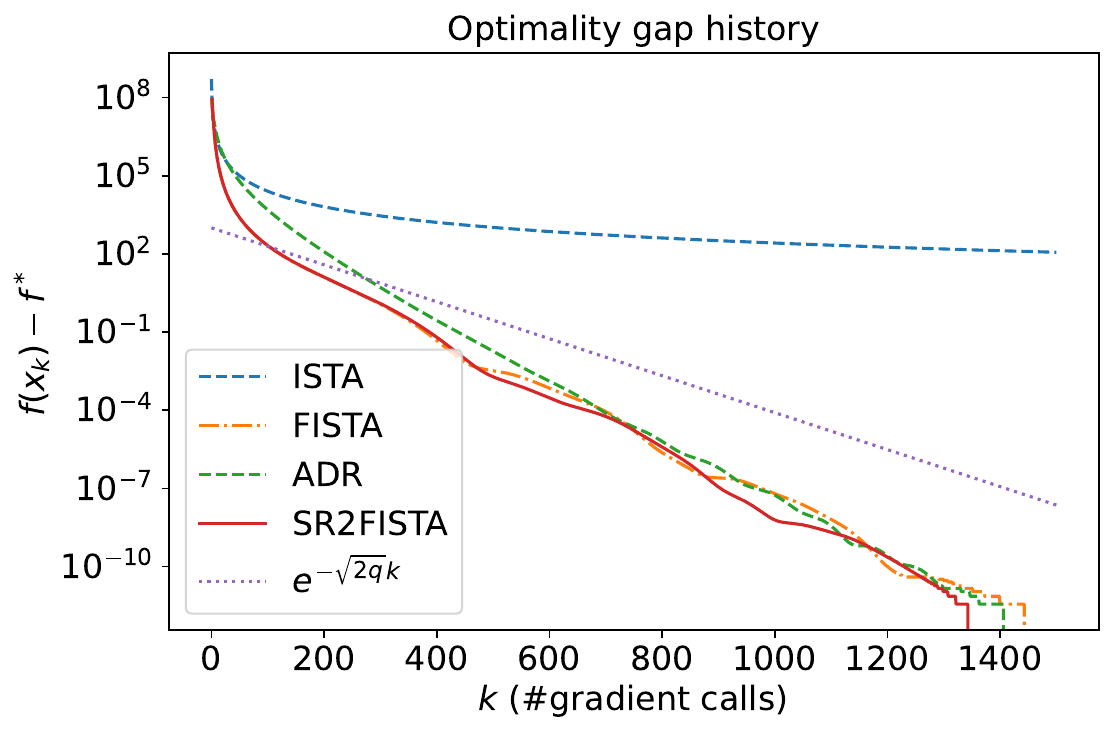}
    \caption{The convergence of $f(x_k) - f^\star$ for problem~\eqref{exprob}. The initial point is $x_0 = (1,\ldots,1) \in \RR^{10000}$. The reference line $e^{-\sqrt{2q}k}$, where $q = \mu/(L_g + \mu_h)$, is the approximate worst-case rate of ADR and SR2FISTA.}
    \label{fig:1}
\end{figure}

\backmatter









\begin{appendices}

\section{Behavior of \cref{alg} when $L_g = \mu_g$}\label{app:Lmu}
In this section, we show that when $L_g=\mu_g$, \cref{alg} returns an optimal solution in a single step by taking the limit $A_1\to\infty$.

When $L_g=\mu_g > 0$, the function $g$ is necessarily of the form $g(x)=\frac{\mu_g}{2}\norm{x-a}^2+b$ for some $a\in\RR^d$ and $b\in\RR$. Moreover, $t_1$ can be chosen arbitrarily large while satisfying condition~\eqref{A}. In the limit $t_1\to\infty$, or equivalently $A_1\to\infty$, scheme~\eqref{ITEMscheme} becomes
\begin{equation}
    \simulparen{
        0 &= v_1-x_1,\\
        0 &= \mu_g z_0+\mu_h x_1-\mu v_1-u,\quad u\in\mu_g(z_0-a)+\partial h(x_1),\\
        z_0-x_0 &= v_0-x_0.
    }
    \label{t1infty}
\end{equation}
Solving this system, we obtain
\[
    0\in\mu_g(x_1-a)+\partial h(x_1),
\]
which is precisely the optimality condition for minimizing $g+h$. Therefore, when $L_g=\mu_g$, \cref{alg} returns an optimal solution in a single step in the limit $A_1\to\infty$.

\section{Well-definedness of the proximal operator}\label{app:welldef}

In this section, we prove that the proximal operator in Line~\ref{line:9} of \cref{alg} is well-defined even when $h$ is only weakly convex.
As stated in \cref{sec:discretization}, it is sufficient to verify the condition $1+\eta_{k+1}\mu_h>0$. The following lemma provides the justification.

\begin{lemma}
    Let $\mu_g, \, \mu_h \in \RR$ satisfy $\mu \coloneqq \mu_g + \mu_h \ge 0$. For any strictly increasing sequence of nonnegative numbers $\setE{A_k}_{k\ge 0}$, define $\setE{B_k}_{k\ge 1}$ by
    \[
        B_{k+1} = \frac{A_{k+1}}{A_{k+1} -A_k} + \frac{\mu_g A_{k+1} + \mu_h A_k}{2(1 + \mu A_k)},
    \]
    as in Line~\ref{line:6} of \cref{alg}. Then, for all $k \ge 0$, we have
    \[
        \mu_h\frac{A_{k+1}-A_k}{2(1 + \mu A_k)B_{k+1}} + 1 > 0.
    \]
\end{lemma}

\begin{proof}
    Since
    \begin{align}
        \MoveEqLeft \frac{A_{k+1}-A_k}{2(1 + \mu A_k)B_{k+1}} = \frac{(A_{k+1}-A_k)^2}{2(1+\mu A_k)A_{k+1} + (A_{k+1}-A_k)(\mu_g A_{k+1} + \mu_h A_k)}\\
        &= \frac{(A_{k+1}-A_k)^2}{2A_{k+1} + \mu_g {A_{k+1}}^2 + (2\mu - \mu_g + \mu_h)A_{k+1}A_k- \mu_h {A_k}^2},
    \end{align}
    it suffices to show that
    \[
    \mu_h (A_{k+1}-A_k)^2 + 2A_{k+1} + \mu_g {A_{k+1}}^2 + (2\mu - \mu_g + \mu_h)A_{k+1}A_k- \mu_h {A_k}^2 > 0.
    \]
    The left-hand side simplifies to
    \[
        2A_{k+1} + \mu{A_{k+1}}^2 + \mu A_{k+1}A_k = A_{k+1}(2 + \mu(A_{k+1} + A_k)),
    \]
    which is positive because $A_{k+1} > 0$ and $\mu \ge 0$.
\end{proof}

\section{Proof of~\eqref{q1rate}}\label{app:rate}

Inequality~\eqref{q1rate} follows from the following lemma.

\begin{lemma}
    For $0 < q < 1$, we have
    \begin{equation}
        \frac{1 + \sqrt{2q - q^2}}{1-q} > 1 +\sqrt{2q} + q.
    \end{equation}
\end{lemma}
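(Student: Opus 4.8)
The plan is to eliminate the two square roots one at a time through substitutions chosen so that every intermediate expression stays low-degree, thereby reducing the lemma to an elementary polynomial positivity statement.

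First I would set $p \coloneqq q - q^2/4$, so that the left-hand side is $\frac{1+\sqrt{2p-p^2}}{1-p}$, and record the two identities — both obtained by direct expansion — that drive the argument:
\[
1 - p = \left(1 - \tfrac{q}{2}\right)^2, \qquad 2p - p^2 = \left(1-\left(1-\tfrac{q}{2}\right)^2\right)\left(1+\left(1-\tfrac{q}{2}\right)^2\right) = 1 - \left(1-\tfrac{q}{2}\right)^4.
\]
I would then substitute $t \coloneqq 1 - q/2$, which runs over $[1/2,1)$ as $q$ runs over $(0,1]$, and use $q = 2(1-t)$, hence $\sqrt{2q} = 2\sqrt{1-t}$. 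The claimed inequality becomes
\[
\frac{1 + \sqrt{1 - t^4}}{t^2} > 3 - 2t + 2\sqrt{1-t}, \qquad t \in [1/2,1).
\]

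Next I would clear the positive factor $t^2$ and collect terms. The rational part becomes $1 - 3t^2 + 2t^3 = (1-t)^2(2t+1)$, since $t=1$ is a double root of $2t^3 - 3t^2 + 1$, and the surviving radical splits as $\sqrt{1-t^4} = \sqrt{1-t}\,\sqrt{(1+t)(1+t^2)}$. This reduces the goal to
\[
(1-t)^2(2t+1) + \sqrt{1-t}\,\left(\sqrt{(1+t)(1+t^2)} - 2t^2\right) > 0 \qquad \text{for } t \in [1/2,1).
\]
Finally I would observe that both summands are strictly positive on $[1/2,1)$: the first because $(1-t)^2(2t+1) > 0$, and the second because $\sqrt{1-t} > 0$ while, squaring the bracket against the nonnegative quantity $2t^2$,
\[
(1+t)(1+t^2) - 4t^4 = 1 + t + t^2 + t^3 - 4t^4 = (1-t)\left(1 + 2t + 3t^2 + 4t^3\right) > 0,
\]
with both factors positive. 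A sum of two strictly positive terms is positive, which finishes the proof.

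The hard part is not any individual computation but finding the right substitution: a brute-force elimination of both radicals by squaring leaves a degree-$10$ polynomial whose positivity on $[0,1]$ is awkward to certify directly. The substitution $t = 1 - q/2$ is what exposes the perfect-square structure $1 - p = t^2$ and $1 - (2p-p^2) = t^4$; once that is in place each radical can be isolated against a manifestly nonnegative quantity, and only the two elementary factorizations $2t^3 - 3t^2 + 1 = (1-t)^2(2t+1)$ and $(1+t)(1+t^2) - 4t^4 = (1-t)(1 + 2t + 3t^2 + 4t^3)$ are required.
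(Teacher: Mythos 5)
Your proof is correct, and it rests on the same key ingredients as the paper's: the substitution $t = 1 - q/2$ (which turns $1 - p$ into $t^2$ and $2p - p^2$ into $1 - t^4$), the factorization $1 - 3t^2 + 2t^3 = (1-t)^2(1+2t)$, and the factorization $1 + t + t^2 + t^3 - 4t^4 = (1-t)(1+2t+3t^2+4t^3)$. Where you diverge is in how the lone residual radical $2\sqrt{1-t}$ is dispatched. The paper moves the rational pieces to the left, verifies that side is already positive, squares the whole inequality against $4(1-t)$, and then factors the resulting expression as a difference of squares involving the double radical $(1-t)^{3/2}(1+2t) + \sqrt{1+t+t^2+t^3}$. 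You instead pull $\sqrt{1-t}$ out of $\sqrt{1-t^4}$ and group it with the $-2t^2\sqrt{1-t}$ term, so the claim decomposes into a sum of two terms, $(1-t)^2(1+2t)$ and $\sqrt{1-t}\bigl(\sqrt{(1+t)(1+t^2)} - 2t^2\bigr)$, each of which is strictly positive on $[1/2,1)$; the only squaring needed is the single comparison $(1+t)(1+t^2) > 4t^4$. The net effect is the same polynomial identity certifies both proofs, but your organization avoids the full-inequality squaring and the subsequent difference-of-squares manipulation, which makes the sign analysis more transparent. (It also sidesteps a small typographical slip in the paper, where a $t^4$ appears in a denominator that should read $t^2$; the downstream computation there is nevertheless consistent with $t^2$.)
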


\begin{proof}
    Multiplying both sides by $1-q > 0$, we see that the desired inequality is equivalent to
    \begin{equation}
        1 + \sqrt{2q - q^2} > 1 +\sqrt{2q} - \sqrt{2}q\sqrt{q} - q^2.
    \end{equation}
    Subtracting $1$ from both sides and dividing by $\sqrt{q} > 0$, it suffices to show that
    \[
        \sqrt{2 - q} - \sqrt{2} + \sqrt2 q + q\sqrt{q} > 0.
    \]
    Indeed, the left-hand side satisfies
    \[
      (\text{LHS}) = - \frac{q}{\sqrt{2 - q} + \sqrt{2}} +  \sqrt2 q + q\sqrt{q} > -\frac{q}{\sqrt2} + \sqrt2 q > 0.
    \]
\end{proof}




\end{appendices}


\bmhead{Acknowledgements}
The author is grateful to Shun Sato, Akatsuki Nishioka, Keita Kume, Basil Victor P\'etusseau, and Takayasu Matsuo for their valuable comments.

\bmhead{Funding}
This work was partially supported by JSPS KAKENHI (24KJ0595). 



\section*{Statements and Declarations}
\bmhead{Competing interests} The author has no competing interests to declare that are relevant to the content of this article.


\bibliography{references}

\end{document}